\newtheorem{theorem}{Theorem}[section]
\newtheorem{lemma}[theorem]{Lemma}
\theoremstyle{definition}
\newtheorem{definition}[theorem]{Definition}
\newtheorem{example}[theorem]{Example}
\theoremstyle{remark}
\numberwithin{equation}{section}
\begin{document}

\title[Analogs of Uniform Algebras]{A Survey of Non-Complex Analogs of Uniform Algebras}


\author{J. W. D. Mason}
\address{School of Mathematical Sciences, University of Nottingham, Nottingham, NG7 2RD, UK}
\email{pmxjwdm@nottingham.ac.uk}
\thanks{The author was supported by a PhD grant from the EPSRC (UK)}

\subjclass[2010]{Primary 46J10; Secondary 12J25.}

\date{September 01, 2010}

\begin{abstract}
We survey commutative and non-commutative analogs of uniform algebras in the Archimedean settings and also offer some non-Archimedean examples. Constraints on the development of non-complex uniform algebras are also discussed.
\end{abstract}

\maketitle
\begin{center}
Accepted to appear in the Proceedings of the Sixth Conference on Function Spaces, AMS Contemporary Mathematics book series.
\end{center}
\section*{Introduction}
\label{sec:Intro}
Uniform algebras have been extensively investigated because of their importance in the theory of uniform approximation and as examples of complex Banach algebras.  As enquiry broadens one may ask whether analogous algebras exist when a complete valued field other than the complex numbers is used as the underlying field of scalars over which the algebra is a vector space.  In many situations the analogous algebras obtained are without qualifying subalgebras. For example if $F$ is a complete valued field other than the complex numbers then the Stone-Weierstrass theorem, or its non-Archimedean generalisation by Kaplansky, shows that the Banach $F$-algebra $C_{F}(X)$ of all continuous $F$-valued functions on a compact Hausdorff space $X$, which we also require to be totally disconnected in the non-Archimedean setting, is without a proper subalgebra that satisfies the conditions of the theorem, see section \ref{sec:Swei}. However, in the commutative, Archimedean setting, Kulkarni and Limaye in a paper from 1981, \cite{Kulkarni-Limaye1981}, introduced the now familiar theory of real function algebras which provides examples with proper qualifying subalgebras. One important departure in the definition of real function algebras from that of complex uniform algebras is that they are real Banach algebras of continuous complex-valued functions. Other appropriately analogous algebras of continuous functions that take values in some complete valued field or division ring extending the field of scalars over which the algebra is a vector space also exist. One benefit of establishing such analogs of uniform algebras is that they increase the variety of complete normed algebras that we know can be represented by algebras of continuous functions. 

In this paper we survey commutative and non-commutative results in the Archimedean settings and also offer some non-Archimedean examples. The results on generalising uniform algebras over all complete valued fields, as the single theory presented at the 6th Conference on Function Spaces at SIU-Edwardsville, will be presented in the author's thesis and later publications.
\section{Constraints on the development of non-complex uniform algebras}
\label{sec:Cons}
\subsection{Complete valued fields}
\label{sec:Comp}
For non-complex uniform algebras we need to consider which fields have a complete valuation i.e. a complete multiplicative norm. The trivial valuation taking values in $\{0,1\}$ can be applied to any field $F$ and induces the trivial topology. It is the most basic example of a complete non-Archimedean valuation which are those complete valuations that satisfy the strong triangle inequality, 
\begin{equation*}
|a-b|\leq\mbox{max}(|a|,|b|)\mbox{ for all }a,b\in F.
\label{equ:Stin}
\end{equation*} 
If the valuation on a valued field is non-Archimedean then we call the valued field a non-Archimedean field, else we call the valued field Archimedean. Every complete Archimedean field is path-connected whereas every complete non-Archimedean field is totally disconnected. It turns out that almost all complete valued fields are non-Archimedean with $\mathbb{R}$ and $\mathbb{C}$ being the only two Archimedean exceptions up to isomorphism as topological fields. This in part follows from the Gel'fand-Mazur Theorem, see \cite{Schikhof} for details. 

There are examples of complete non-Archimedean fields of non-zero characteristic with non-trivial valuation. For each there is a prime $p$ such that the field is a transcendental extension of the finite field $\mathbb{F}_{p}$ of $p$ elements. One example of this sort is the valued field of formal Laurent series $\mathbb{F}_{p}\{\{T\}\}$ in one variable over $\mathbb{F}_{p}$ with termwise addition, multiplication in the form of the Cauchy product and valuation given at zero by $|0|_{T}:=0$ and on the units $\mathbb{F}_{p}\{\{T\}\}^{*}$ by,
\begin{equation*}
\mbox{$|\sum_{n\in\mathbb{Z}}a_{n}T^{n}|_{T}$}:=r^{-\mbox{min}\{n:a_{n}\not= 0\}}\mbox{ for any fixed }r>1.
\label{equ:Tval}
\end{equation*}
We note here that the only valuation on a finite field is the trivial valuation.

On the other hand complete valued fields of characteristic zero necessarily contain one of the completions of the rational numbers $\mathbb{Q}$. The Levi-Civita field, see \cite{Shamseddine}, is such a valued field. A total order can be put on the Levi-Civita field such that the order topology agrees with the topology induced by the field's valuation which is non-trivial. This might be useful to those interested in generalising the theory of C*-algebras to new fields where there is a need to define positive elements. The completion of $\mathbb{Q}$ that the Levi-Civita field contains is in fact $\mathbb{Q}$ itself since the valuation when restricted to $\mathbb{Q}$ is trivial. 

The non-trivial completions of the rational numbers are given by Ostrowski's Theorem, see \cite{Schikhof}.
\begin{theorem}
Each non-trivial valuation on the field of rational numbers is equivalent either to the absolute value function or to one of the $p$-adic valuations for some prime $p$.
\label{thr:Ostr}
\end{theorem}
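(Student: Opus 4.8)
The plan is to dichotomize according to the behaviour of the given valuation $|\cdot|$ on the positive integers, using two reductions at the outset. First, from $|{-1}|^{2} = |1| = 1$ we get $|{-1}| = 1$ and hence $|{-n}| = |n|$, so it suffices to describe $|\cdot|$ on $\mathbb{N}$; and second, multiplicativity then extends any such description to all of $\mathbb{Q}$. The two cases are: (i) the Archimedean case, in which $|n| > 1$ for some positive integer $n$; and (ii) the non-Archimedean case, in which $|n| \le 1$ for every positive integer $n$. I will show that case (i) forces equivalence to the absolute value function $|\cdot|_{\infty}$ and case (ii) forces equivalence to a $p$-adic valuation, where equivalence means that one valuation is a fixed positive real power of the other.

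For the Archimedean case, fix integers $m, n > 1$ and expand $n$ to base $m$ as $n = \sum_{i=0}^{k} a_{i} m^{i}$ with $0 \le a_{i} < m$ and $k \le \log n / \log m$. Applying the triangle inequality and bounding each digit crudely by $|a_{i}| \le a_{i} < m$ yields an estimate of the form $|n| \le C\,(k+1)\,\max(1,|m|)^{k}$, with $C = \max_{0 \le a < m}|a|$ depending only on $m$. The key manoeuvre is to replace $n$ by $n^{t}$, apply the same expansion, take $t$-th roots, and let $t \to \infty$; this annihilates the polynomial-in-$t$ prefactor and produces the clean inequality $|n|^{1/\log n} \le \max(1,|m|)^{1/\log m}$. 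Since some integer has valuation exceeding $1$, this inequality rules out any integer greater than $1$ having valuation at most $1$; hence $|m| > 1$ for all $m > 1$, the inequality becomes symmetric in $m$ and $n$, and $|n|^{1/\log n}$ is forced to be a constant $\lambda > 1$. Writing $\lambda = e^{s}$ gives $|n| = n^{s} = |n|_{\infty}^{s}$, and multiplicativity propagates this to all of $\mathbb{Q}$.

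For the non-Archimedean case, set $\mathfrak{p} = \{\, n \in \mathbb{Z} : |n| < 1 \,\}$. Using $|n| \le 1$ together with the strong triangle inequality I will verify that $\mathfrak{p}$ is an ideal of $\mathbb{Z}$, and multiplicativity shows it is prime; non-triviality of the valuation makes it a nonzero proper ideal. Hence $\mathfrak{p} = p\mathbb{Z}$ for a unique prime $p$, and $|m| = 1$ whenever $p \nmid m$. Writing a nonzero rational as $x = p^{k} a/b$ with $a, b$ coprime to $p$ gives $|x| = |p|^{k}$, so setting $\rho := |p| \in (0,1)$ and $s := -\log_{p}\rho > 0$ identifies $|x| = |x|_{p}^{s}$, the desired equivalence.

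I expect the main obstacle to be the Archimedean estimate: pinning down the digit and length bounds and, above all, executing the limiting argument with $n^{t}$, which is precisely what converts the crude polynomial inequality into the exact relation $|n|^{1/\log n} = |m|^{1/\log m}$. The non-Archimedean case is comparatively routine, resting only on the prime-ideal structure of $\mathbb{Z}$.
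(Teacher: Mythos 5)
The paper does not prove this theorem at all: Ostrowski's Theorem is quoted as background with a citation to Schikhof's \emph{Ultrametric calculus}, so there is no in-paper argument to compare yours against. Your proposal is the classical Ostrowski proof and is essentially correct: the dichotomy on whether some positive integer has valuation exceeding $1$, the base-$m$ expansion combined with the $n^{t}$ trick and $t$-th roots to force $|n|^{1/\log n}$ to be constant in the Archimedean case, and the prime ideal $\mathfrak{p}=\{n\in\mathbb{Z}:|n|<1\}$ in the bounded case are exactly the standard route, and all the estimates you sketch go through. The one point you should not gloss over is in case (ii): the strong triangle inequality is not available as a hypothesis there, since your case division is ``$|n|\le 1$ for all positive integers $n$,'' not ``$|\cdot|$ is ultrametric.'' You need it (or something equivalent) to show $\mathfrak{p}$ is closed under addition, because the ordinary triangle inequality only gives $|m+n|\le|m|+|n|$, which can exceed $1$ even when both summands have valuation below $1$. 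The fix is the same device you already use in the Archimedean half: from $|(m+n)^{t}|\le\sum_{i}|\binom{t}{i}|\,|m|^{i}|n|^{t-i}\le(t+1)\max(|m|,|n|)^{t}$ (using $|k|\le 1$ for every integer $k$), take $t$-th roots and let $t\to\infty$ to get $|m+n|\le\max(|m|,|n|)$. With that lemma inserted, the non-Archimedean half closes up and the whole proof is complete.
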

Concerning $p$-adic theory, for a given prime $p$, each element of $\mathbb{Q}$ can be uniquely expanded in the form of a formal Laurent series evaluated at $p$ with integer coefficients belonging to $\{0,1, \cdots, p-1\}$. For example for $p=5$ we have,
\begin{equation*}
\frac{1}{2}=3\cdot5^{0}+2\cdot5+2\cdot5^{2}+2\cdot5^{3}+2\cdot5^{4}+\cdots.
\label{equ:Padc}
\end{equation*}
Similar to the case of the decimal expansion, a $p$-adic expansion of an element of $\mathbb{Q}$ will have a term such that the coefficients of all later terms, to its right, form a repeating sequence. For a given prime $p$ the $p$-adic valuation on $\mathbb{Q}$ is given at zero by $|0|_{p}:=0$ and at any unit $a=\sum_{n\in\mathbb{Z}}a_{n}p^{n}\in\mathbb{Q}^{*}$ by,
\begin{equation*}
|a|_{p}:=p^{-\nu_{p}(a)}\mbox{ where }\nu_{p}(a):=\mbox{min}\{n:a_{n}\not= 0\}.
\label{equ:Pval}
\end{equation*}
The function $\nu_{p}$ is often extended to zero by $\nu_{p}(0):=+\infty$ and may be referred to as the valuation logarithm. Denoting the trivial valuation on $\mathbb{Q}$ by $|\cdot|_{\infty}$ and the absolute value function by $|\cdot|_{0}$, the different valuations on $\mathbb{Q}$, when restricted to the units $\mathbb{Q}^{*}$, are related by the equation,
\begin{equation*}
|\cdot|_{0}=\frac{1}{|\cdot|_{2}|\cdot|_{3}|\cdot|_{5}\cdots|\cdot|_{\infty}}.
\label{equ:Allv}
\end{equation*}
For a given prime $p$ the valued field $\mathbb{Q}_{p}$ of $p$-adic numbers is the completion of $\mathbb{Q}$ with respect to the valuation $|\cdot|_{p}$ and it is the valued field of all formal Laurent series evaluated at $p$ with integer coefficients belonging to $\{0,1, \cdots, p-1\}$. For each prime $p$, $\mathbb{Q}_{p}$ is locally compact. Moreover as a field, rather than as a valued field, $\mathbb{Q}_{p}$ has an embedding into $\mathbb{C}$. The $p$-adic valuation on $\mathbb{Q}_{p}$ can then be prolongated to a complete valuation on the complex numbers which in this case as a valued field we denote as $\mathbb{C}_{p}$. Unlike $\mathbb{C}$ however, $\mathbb{C}_{p}$ is not locally compact. 

More generally, and unlike in the Archimedean setting, we have the following theorem. 
\begin{theorem}
Every complete non-Archimedean field $F$ has a non-trivial extension $L$ for which the complete valuation on $F$ can be prolongated to a complete valuation on $L$. 
\label{thr:Krul}
\end{theorem}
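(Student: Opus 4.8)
The plan is to exhibit an explicit proper extension of $F$ and equip it with a prolonged valuation, rather than to argue abstractly for mere existence. First I would form the field $F(T)$ of rational functions in one indeterminate $T$ over $F$. Since $T$ is transcendental over $F$ we have $F\subsetneq F(T)$ automatically, so the only real content is to prolong the valuation from $F$ to $F(T)$ and then to pass to a completion without losing this proper inclusion.

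To prolong the valuation I would use the Gauss valuation. For a nonzero polynomial $f=\sum_{i}a_{i}T^{i}\in F[T]$ set $\|f\|:=\max_{i}|a_{i}|$, and extend this to quotients by $\|f/g\|:=\|f\|/\|g\|$. Restricting to constant polynomials recovers the original valuation on $F$, so whatever we obtain is genuinely a prolongation. The strong triangle inequality for $\|\cdot\|$ on $F[T]$ is inherited coefficientwise from that on $F$, since $\|f+g\|=\max_{i}|a_{i}+b_{i}|\leq\max(\|f\|,\|g\|)$, and it carries over to $F(T)$; hence the prolonged valuation is again non-Archimedean.

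The main obstacle --- and the only step demanding real work --- is verifying that $\|\cdot\|$ is genuinely multiplicative on $F[T]$ (submultiplicativity being immediate), because multiplicativity is exactly what makes $\|\cdot\|$ a valuation and simultaneously shows that the passage to quotients is well defined and independent of representatives. Here I would argue directly from the ultrametric inequality rather than through residue fields. Given $f=\sum_{i}a_{i}T^{i}$ and $g=\sum_{j}b_{j}T^{j}$, let $i_{0}$ and $j_{0}$ be the least indices at which $|a_{i_{0}}|=\|f\|$ and $|b_{j_{0}}|=\|g\|$ are attained. In the coefficient of $T^{i_{0}+j_{0}}$ in $fg$, every cross term other than $a_{i_{0}}b_{j_{0}}$ has either $i<i_{0}$ or $j<j_{0}$, and so by minimality it carries a factor of strictly smaller valuation; thus $a_{i_{0}}b_{j_{0}}$ strictly dominates the sum and the ultrametric inequality forces that coefficient to have absolute value exactly $\|f\|\,\|g\|$. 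This yields $\|fg\|\geq\|f\|\,\|g\|$, and combined with submultiplicativity gives equality.

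Finally I would pass to the completion $L:=\widehat{F(T)}$ with respect to $\|\cdot\|$. By the standard theory of completions of valued fields (cf. \cite{Schikhof}) the valuation extends continuously to a complete valuation on $L$, and the canonical isometric embedding gives $F\subsetneq F(T)\subseteq L$, so the extension $L$ is non-trivial, as required. I would close by remarking on the degenerate case: when the valuation on $F$ is trivial, the Gauss valuation on $F(T)$ is also trivial and hence already complete, so one may simply take $L=F(T)$ and no completion step is needed.
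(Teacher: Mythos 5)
Your proof is correct, but it takes a genuinely different route from the paper. The paper gives no explicit construction: it simply derives the theorem from Krull's existence theorem (which extends a valuation on $F$ to an arbitrary field extension of $F$) together with the fact that every valued field has a completion, citing \cite{McCarthy} and \cite{Schikhof}. You instead exhibit a concrete witness, namely the Gauss valuation $\|\sum_{i}a_{i}T^{i}\|=\max_{i}|a_{i}|$ on $F(T)$ followed by completion, and your verification of multiplicativity via the least indices $i_{0},j_{0}$ attaining the maximum is the standard and correct Gauss-lemma argument: every other term $a_{i}b_{j}$ contributing to the coefficient of $T^{i_{0}+j_{0}}$ has $i<i_{0}$ or $j<j_{0}$ and hence strictly smaller absolute value, so the isosceles-triangle principle forces that coefficient to have absolute value exactly $\|f\|\,\|g\|$. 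What your approach buys is self-containedness and explicitness --- you avoid the appeal to Krull's theorem (which in general rests on Chevalley's extension theorem and Zorn's lemma) and you can read off concrete data such as the value group of $L$ being that of $F$. What the paper's abstract route buys is generality: Krull's theorem prolongs the valuation to \emph{any} prescribed extension, including the algebraic ones such as $\mathbb{Q}_{5}(\sqrt{2})$ and the prolongation to $\mathbb{C}_{p}$ that the paper actually uses later, which your transcendental construction does not by itself provide. Your closing remark on the trivially valued case is harmless but unnecessary, since the completion of a trivially valued field is the field itself and the general argument already covers it.
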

Theorem \ref{thr:Krul} follows from Krull's existence theorem and the fact that every valued field has a completion, see \cite{McCarthy} and \cite{Schikhof} for details. With the above details on complete valued fields at hand we now progress towards a survey of non-complex uniform algebras.
\subsection{The Stone-Weierstrass theorem and its generalisations}
\label{sec:Swei}
Also see Section \ref{sec:Ncya}. We recall the following basic definition and note that introductions to uniform algebras can be found in \cite{Browder}, \cite{Gamelin} and \cite{Stout}.
\begin{definition}
Let $C_{\mathbb{C}}(X)$ be the algebra of all continuous complex valued functions defined on a compact, Hausdorff space $X$. A {\em{uniform algebra}}, $A$, is a subalgebra of $C_{\mathbb{C}}(X)$ that is complete with respect to the sup norm, contains the constant functions and separates the points of $X$ in the sense that for all $x_{1}, x_{2}\in X$ with $x_{1}\not= x_{2}$ there is $f\in A$ satisfying $f(x_{1})\not= f(x_{2})$.
\label{def:Ualg}
\end{definition}
Some authors take Definition \ref{def:Ualg} to be a representation of uniform algebras and take a uniform algebra $A$ to be a complex unital Banach algebra with square preserving norm, $\|a^{2}\|=\|a\|^{2}$ for all $a\in A$. This is quite legitimate since the Gelfand transform shows us that every such algebra is isometrically isomorphic to an algebra conforming to Definition \ref{def:Ualg}. In this paper we mainly consider non-complex analogs of Definition \ref{def:Ualg} but acknowledge the importance of representation results, many of which can be found in the references.

The most obvious non-complex analog of Definition \ref{def:Ualg} is obtained by simply replacing the complex numbers in the definition by some other complete valued field $F$. In this case, whilst $C_{F}(X)$ will be complete and contain the constants, we need to take care concerning the topology on $X$ when $F$ is non-Archimedean. 
\begin{theorem}
Let $F$ be a complete, non-Archimedean, valued field and let $C_{F}(X)$ be the algebra of all continuous $F$-valued functions defined on a compact, Hausdorff space $X$. Then $C_{F}(X)$ separates the points of $X$ if and only if $X$ is totally disconnected.
\label{thr:Xtop}
\end{theorem}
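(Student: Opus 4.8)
The plan is to prove the two implications separately, using in an essential way the fact, recorded earlier in the excerpt, that every complete non-Archimedean field $F$ is totally disconnected.

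For the forward implication I would argue by contraposition. Suppose $X$ is not totally disconnected, so that some connected component $C\subseteq X$ contains two distinct points $x_{1},x_{2}$. Given any $f\in C_{F}(X)$, the restriction $f|_{C}$ is continuous and $C$ is connected, so $f(C)$ is a connected subset of $F$. Since $F$ is totally disconnected its only connected subsets are singletons, which forces $f$ to be constant on $C$ and in particular $f(x_{1})=f(x_{2})$. As $f$ was arbitrary, no function in $C_{F}(X)$ separates $x_{1}$ from $x_{2}$, so $C_{F}(X)$ fails to separate the points of $X$.

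For the reverse implication, assume $X$ is totally disconnected and fix $x_{1}\neq x_{2}$. The crux is the topological fact that in a compact Hausdorff space the quasi-component of a point, i.e. the intersection of all clopen sets containing it, coincides with its connected component. Granting this, total disconnectedness makes each quasi-component a singleton, so there is a clopen set $U$ with $x_{1}\in U$ and $x_{2}\notin U$. The characteristic function $\chi_{U}$, taking the value $1\in F$ on $U$ and $0\in F$ on $X\setminus U$, is then continuous precisely because $U$ is clopen, lies in $C_{F}(X)$, and satisfies $\chi_{U}(x_{1})=1\neq 0=\chi_{U}(x_{2})$. I note that only the elements $0$ and $1$ of $F$ are used, so no arithmetic properties of the valuation beyond total disconnectedness of $F$ enter here.

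The main obstacle is establishing the quasi-component/component equality, which is where compactness is indispensable. I would prove it by showing that the quasi-component $Q$ of $x_{1}$ is connected: if not, write $Q=A\sqcup B$ as a partition into disjoint nonempty relatively clopen (hence closed in $X$) pieces with $x_{1}\in A$, use normality of the compact Hausdorff space $X$ to obtain disjoint open $U\supseteq A$ and $V\supseteq B$ with $\overline{U}\cap\overline{V}=\emptyset$, and then extract by compactness from the clopen sets defining $Q$ a single clopen $C\supseteq Q$ with $C\subseteq U\cup V$. A short argument shows $C\cap U=C\cap\overline{U}$ is itself clopen and contains $x_{1}$, whence $Q\subseteq C\cap U\subseteq U$, contradicting $\emptyset\neq B\subseteq Q\cap V$. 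Once $Q$ is known to be connected it must equal the component of $x_{1}$, completing the argument. Alternatively, this separation property is standard and could simply be invoked as the statement that a compact Hausdorff totally disconnected space is zero-dimensional.
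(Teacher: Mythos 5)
Your proof is correct and follows essentially the same route as the paper: the forward direction is the identical connected-image argument, and the reverse direction rests on the same topological fact (a totally disconnected compact Hausdorff space has enough clopen sets to separate points) followed by a two-valued continuous function on a clopen set, which is exactly the content of the paper's Lemma \ref{lem:Urys}. The only difference is that you establish the clopen separation by proving the quasi-component/component coincidence from scratch, whereas the paper simply cites Willard's Theorem 29.7 that each point has a neighbourhood base of clopen sets.
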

For clarification of terminology from general topology used in this section see \cite{Willard}. Before giving a proof of Theorem \ref{thr:Xtop} we have the following version of Urysohn's lemma.
\begin{lemma}
Let $X$ be a totally disconnected, compact, Hausdorff space with $\{x,y_{1},y_{2},y_{3}, \cdots, y_{n}\}\subseteq X$, $x\not= y_{i}$ for all $i\in\{1, \cdots, n\}$ where $n\in\mathbb{N}$. Let $L$ be any non-empty topological space and $a,b\in L$. Then there exists a continuous map $h:X\longrightarrow L$ such that $h(x)=a$ and $h(y_{1})=h(y_{2})=h(y_{3})= \cdots =h(y_{n})=b$.
\label{lem:Urys}
\end{lemma}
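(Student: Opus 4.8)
The plan is to reduce everything to the topological fact that a totally disconnected, compact, Hausdorff space is zero-dimensional, in the sense that any two distinct points can be separated by a clopen (simultaneously closed and open) set. Granting this, the map $h$ will be built as a two-valued function which is automatically continuous because its only nontrivial level sets are clopen, and crucially this argument will place no restriction whatsoever on the topology of the target space $L$.

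First I would recall why points can be separated by clopen sets. In a compact Hausdorff space the quasicomponent of a point $x$ --- the intersection of all clopen sets containing $x$ --- coincides with its connected component (see \cite{Willard}). Since $X$ is totally disconnected every component is a singleton, so the quasicomponent of $x$ is exactly $\{x\}$. Consequently, for each $i\in\{1,\dots,n\}$ the point $y_{i}$, being distinct from $x$, fails to lie in the intersection defining the quasicomponent, and hence there exists a clopen set $U_{i}$ with $x\in U_{i}$ and $y_{i}\notin U_{i}$.

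Next I would form the finite intersection
\[
U:=\bigcap_{i=1}^{n}U_{i},
\]
which is again clopen, contains $x$, and excludes every $y_{i}$. I would then define $h\colon X\longrightarrow L$ by setting $h(z):=a$ for $z\in U$ and $h(z):=b$ for $z\in X\setminus U$. Since $U$ and $X\setminus U$ are both open, the preimage under $h$ of any open subset of $L$ is one of $\emptyset$, $U$, $X\setminus U$ or $X$, each of which is open; thus $h$ is continuous. By construction $h(x)=a$ and $h(y_{i})=b$ for all $i$, which completes the argument.

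The only genuine obstacle is the separation-by-clopen-sets step, where compactness is indispensable: without it the quasicomponent may be strictly larger than the component, and distinct points of a totally disconnected space need not be separable by clopen sets. Everything after that --- the finite intersection and the verification of continuity --- is routine and, notably, entirely independent of any structure carried by $L$.
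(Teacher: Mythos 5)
Your proof is correct and follows essentially the same route as the paper: both arguments produce a clopen set containing $x$ and missing every $y_{i}$, and then define the two-valued map, whose continuity is automatic and independent of the topology on $L$. The only difference is the standard fact invoked to get the clopen separation --- you use that quasicomponents coincide with components in compact Hausdorff spaces, whereas the paper first separates $x$ from the $y_{i}$ by open sets via the Hausdorff property and then shrinks to a clopen neighbourhood of $x$ using the existence of a neighbourhood base of clopen sets (Theorem 29.7 of \cite{Willard}); these are interchangeable tools from the same circle of ideas.
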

\begin{proof}
Since $X$ is a Hausdorff space, for each $i\in\{1, \cdots, n\}$ there are disjoint open subsets $U_{i}$ and $V_{i}$ of $X$ with $x\in U_{i}$ and $y_{i}\in V_{i}$. Hence $U:=\bigcap_{i\in\{1, \cdots, n\}}U_{i}$ is an open subset of $X$ with $x\in U$ and $U\cap V_{i}=\emptyset$ for all $i\in\{1, \cdots, n\}$. Now since $X$ is a totally disconnected, compact, Hausdorff space, $x$ has a neighborhood base of clopen sets, see Theorem 29.7 of \cite{Willard}. Hence there is a clopen subset $W$ of $X$ with $x\in W\subseteq U$. The function $h:X\longrightarrow L$ given by $h(W):=\{a\}$ and $h(X\backslash W):=\{b\}$ is continuous as required.
\end{proof}
We now give the proof of Theorem \ref{thr:Xtop}.
\begin{proof}
With reference to Lemma \ref{lem:Urys} it remains to show that $C_{F}(X)$ separates the points of $X$ only if $X$ is totally disconnected. Let $X$ be a compact, Hausdorff space such that $C_{F}(X)$ separates the points of $X$. Let $U$ be a non-empty connected subset of $X$ and let $f\in C_{F}(X)$. We note that $f(U)$ is a connected subset of $F$ since $f$ is continuous. Now, since $F$ is non-Archimedean it is totally disconnected i.e. its connected subsets are singletons. Hence $f(U)$ is a singleton and so $f$ is constant on $U$. Therefore, since $C_{F}(X)$ separates the points of $X$, $U$ is a singleton and $X$ is totally disconnected.
\end{proof}
We next consider the constraints on $C_{F}(X)$ revealed by the Stone-Weierstrass theorem and its generalisations. Recall that in the complex case, for suitable $X$, there exist uniform algebras that are proper subalgebras of $C_{\mathbb{C}}(X)$. However if $A$ is such a uniform algebra then $A$ is not self-adjoint, that is there is $f\in A$ with $\bar{f}\notin A$. 
\begin{example}
A standard example is the {\em{disc algebra}} $A(\bar{\Delta})\subseteq C_{\mathbb{C}}(\bar{\Delta})$, of functions analytic on $\Delta:=\{z\in\mathbb{C}:|z|<1\}$, which is as far from being self-adjoint as possible since if both $f$ and $\bar{f}$ are in $A(\bar{\Delta})$ then $f$ is constant, see \cite{Browder}, \cite{Kulkarni-Limaye1992} or \cite{Stout}. Furthermore by Mergelyan's theorem, see \cite{Gamelin}, or otherwise we have $P(\bar{\Delta})=A(\bar{\Delta})$ where $P(\bar{\Delta})$ is the uniform algebra of all functions on $\bar{\Delta}$ that can be uniformly approximated by polynomials restricted to $\bar{\Delta}$ with complex coefficients.
\label{exa:Dalg}
\end{example}
There are many interesting examples involving Swiss cheese sets, see \cite{Feinstein-Heath}, \cite{Gamelin}, \cite{Mason} and \cite{Roth}.

In the real case the Stone-Weierstrass theorem for $C_{\mathbb{R}}(X)$ says that for every compact Hausdorff space $X$, $C_{\mathbb{R}}(X)$ is without a proper subalgebra that is uniformly closed, contains the real numbers and separates the points of $X$. A proof can be found in \cite{Kulkarni-Limaye1992}.

We close this section by considering the non-Archimedean case which is given by a theorem of Kaplansky, see \cite{Berkovich} or \cite{Kaplansky}.
\begin{theorem}
Let $F$ be a complete non-Archimedean valued field, let $X$ be a totally disconnected compact Hausdorff space, and let $A$ be a $F$-subalgebra of $C_{F}(X)$ which satisfies the following conditions:
\begin{enumerate}
\item[\textup{(a)}]
the elements of $A$ separate the points of $X$;
\item[\textup{(b)}]
for each $x\in X$ there exists $f\in A$ with $f(x)\not=0$.
\end{enumerate}
Then $A$ is everywhere dense in $C_{F}(X)$.
\label{thr:Kapl}
\end{theorem}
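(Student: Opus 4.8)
The plan is to show that the uniform closure $\bar{A}$ equals $C_F(X)$ by proving that it contains the characteristic function $\chi_U$ of every clopen set $U\subseteq X$. This suffices: since $X$ is totally disconnected, compact and Hausdorff, its clopen sets form a base for the topology (Theorem 29.7 of \cite{Willard}), and a standard argument (uniform continuity on the compact space $X$ together with the clopen base) shows that the locally constant functions, i.e. the finite $F$-linear combinations of such $\chi_U$, are dense in $C_F(X)$. As $\bar{A}$ is a closed $F$-subalgebra, it then only remains to capture each $\chi_U$.

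First I would record two tools. \emph{(Interpolation.)} Conditions (a) and (b) force the evaluation map $A\to F^{n}$, $f\mapsto(f(x_1),\dots,f(x_n))$, to be surjective for any finite set of distinct points: its image is an $F$-subalgebra of $F^{n}$, and separation together with non-vanishing rules out every proper subalgebra. The two-point case, where one checks directly that a subalgebra of $F^{2}$ containing an off-diagonal vector and vectors with each coordinate nonzero must be all of $F^{2}$, is the crucial one, the general case following by a Lagrange-type combination of two-point interpolants. Thus $A$ realises any prescribed values at finitely many points. \emph{(Idempotent lifting.)} If $f\in A$ satisfies $\|f^{2}-f\|<1$ in the sup norm, then $\|f\|\le 1$ (otherwise some $|f(x)|>1$ would give $|f(x)^{2}-f(x)|>1$), and the iterates $f_{n+1}=3f_n^{2}-2f_n^{3}$ stay in $A$ since the polynomial has no constant term. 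Writing $f_{n+1}^{2}-f_{n+1}=(f_n^{2}-f_n)^{2}\,q(f_n)$ for a fixed integer-coefficient polynomial $q$, the strong triangle inequality gives $\|f_{n+1}^{2}-f_{n+1}\|\le\|f_n^{2}-f_n\|^{2}$, so $(f_n)$ converges uniformly to an idempotent $e\in\bar{A}$. As $e^{2}=e$ pointwise, $e$ takes only the values $0$ and $1$, whence $e=\chi_V$ for the clopen set $V=\{x:|f(x)-1|<1\}$.

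The core of the argument is then to manufacture, for a clopen $U$ and a point $x\in U$, an idempotent in $\bar{A}$ equal to $1$ at $x$ and vanishing off $U$. My approach would be to build, for each $y$ in the compact set $X\setminus U$, an idempotent $\chi_{V_y}\in\bar{A}$ with $x\in V_y$ and $y\notin V_y$: start from an interpolant $g_y\in A$ with $g_y(x)=1$ and $g_y(y)=0$ (continuity supplies clopen neighbourhoods of $x$ and of $y$ on which $|g_y-1|<1$, respectively $|g_y|<1$), and promote it to an idempotent via the lifting tool (the delicate point; see below). Compactness of $X\setminus U$ then lets me intersect finitely many of these, and since $\chi_{V_{y_1}}\cdots\chi_{V_{y_m}}=\chi_{V_{y_1}\cap\cdots\cap V_{y_m}}$ again lies in the algebra $\bar{A}$, I obtain $\chi_V\in\bar{A}$ with $x\in V\subseteq U$. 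Covering $U$ by such sets $V$, extracting a finite subcover and disjointifying by inclusion--exclusion — each step staying inside $\bar{A}$ because products and $F$-linear combinations of the idempotents involved are available — finally assembles $\chi_U$.

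The step I expect to be the real obstacle is the passage from the interpolant $g_y$ to a genuine idempotent. Interpolation controls $g_y$ only at finitely many chosen points, whereas the lifting tool demands the \emph{uniform} bound $\|g^{2}-g\|<1$, i.e. that \emph{every} value of $g$ reduce to $0$ or $1$ in the residue field of $F$; in particular the naive device of multiplying interpolants can push values outside the unit ball and destroy this control precisely while one tries to keep $g(x)$ of absolute value $1$. Reconciling the merely pointwise data furnished by (a) and (b) with a global ultrametric estimate is where total disconnectedness (clopen neighbourhoods), compactness (finite covers), completeness (the limiting idempotent) and the strong triangle inequality must all be combined, and I would expect to need either an induction over the powers of the maximal ideal of the valuation ring of $F$, or the one-variable version of the theorem (density of polynomials in $C_F(S)$ for compact $S\subseteq F$) as a base case, to carry it through.
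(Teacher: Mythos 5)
The paper does not actually prove this theorem: it quotes it from Kaplansky's 1950 paper, citing \cite{Kaplansky} and \cite{Berkovich}, so your attempt can only be measured against the standard argument. Most of your scaffolding is correct and is indeed how such a proof is organised: locally constant functions are dense in $C_{F}(X)$, so it suffices to put each $\chi_{U}$ ($U$ clopen) into $\bar{A}$; finite interpolation does follow from (a) and (b) by the subalgebra-of-$F^{n}$ argument; the iteration $f\mapsto 3f^{2}-2f^{3}$ does converge to an idempotent of $\bar{A}$ when $\|f^{2}-f\|<1$ (the identity $e^{2}-e=(f^{2}-f)^{2}\left((2f-1)^{2}-4\right)$ together with $|2|\leq 1$ gives the quadratic decay, and omitting the constant term keeps the iterates in the possibly non-unital $A$); and the compactness and Boolean-algebra bookkeeping that assembles $\chi_{U}$ from two-point separating idempotents is sound.

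The genuine gap is exactly the one you flag: producing, for $x\not=y$, an element $h$ of $A$ satisfying the \emph{uniform} bound $\|h^{2}-h\|<1$ together with $|h(x)-1|<1$ and $|h(y)|<1$. Conditions (a) and (b) yield only pointwise data at finitely many points, and nothing in the proposal converts this into the global ultrametric estimate that the lifting lemma requires; as you observe, multiplying interpolants destroys the control. Your reduction of this step to a one-variable statement --- given the compact set $S=g(X)\subseteq F$ and the clopen piece $S_{1}=\{s\in S:|s-1|<\delta\}$, find a polynomial $p$ with $p(0)=0$ and $\sup_{s\in S}|p(s)-\chi_{S_{1}}(s)|<1$ --- is correct, but that statement is not an elementary base case: it is precisely the theorem for $X\subseteq F$ with $A$ the polynomials vanishing at $0$, and it is where all of the work lies, since in a non-Archimedean field knowing $|w|=1$ says nothing about $|w-1|$, so no amount of control on absolute values of an interpolant forces its values to reduce to $0$ or $1$ modulo the maximal ideal. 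Your alternative suggestion, induction over powers of the maximal ideal of the valuation ring, also cannot work for a general complete non-Archimedean $F$, where the value group may be dense and the maximal ideal may equal its own square. As it stands the proposal is an accurate reduction of the theorem to its essential special case rather than a proof of it; to close the gap you must supply Kaplansky's one-variable approximation argument (or an equivalent), for which see \cite{Kaplansky}.
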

Note that, in Theorem \ref{thr:Kapl}, $A$ being a $F$-subalgebra of $C_{F}(X)$ means that $A$ is a subalgebra of $C_{F}(X)$ and a vector space over $F$. If we take $A$ to be unital then condition (b) in Theorem \ref{thr:Kapl} is automatically satisfied and the theorem is analogous to the real version of the Stone-Weierstrass theorem. 

In section \ref{sec:Ncom} we will see that real function algebras are a useful example when considering non-complex analogs of uniform algebras with qualifying subalgebras.
\section{Non-complex analogs of uniform algebras}
\label{sec:Ncom}
\subsection{The commutative, Archimedean case}
\label{sec:Ycya}
Real function algebras were introduced by Kulkarni and Limaye in a paper from 1981, see \cite{Kulkarni-Limaye1981}. For our purposes we recall their definition and consider an example. For a thorough text on the theory see \cite{Kulkarni-Limaye1992}. 
\begin{definition}
Let $X$ be a compact Hausdorff space and $\tau$ a topological involution on $X$, i.e. a homeomorphism with $\tau\circ\tau =\mbox{id}$ on $X$. A {\em{real function algebras on}} $(X,\tau)$ is a real subalgebra $A$ of
\begin{equation*}
C(X,\tau):=\{f\in C_{\mathbb{C}}(X):f(\tau(x))=\bar{f}(x)\mbox{ for all }x\in X\}
\end{equation*}
that is complete with respect to the sup norm, contains the real numbers and separates the points of $X$.
\label{def:Refa}
\end{definition}
Note that $C(X,\tau)$ in Definition \ref{def:Refa} is itself a real function algebra on $(X,\tau)$ and in some sense it is to real function algebras as $C_{\mathbb{C}}(X)$ is to uniform algebras. The following example is standard, see \cite{Kulkarni-Limaye1992}.
\begin{example}
Recall from Example \ref{exa:Dalg} the disc algebra $A(\bar{\Delta})$ on the closed unit disc and let $\tau:\bar{\Delta}\longrightarrow\bar{\Delta}$ be the map $\tau(z):=\bar{z}$ given by complex conjugation, which we note is a Galois automorphism on $\mathbb{C}$. Now let 
\begin{equation*}
B(\bar{\Delta}):=A(\bar{\Delta})\cap C(\bar{\Delta},\tau). 
\end{equation*}
We see that $B(\bar{\Delta})$ is complete since both $A(\bar{\Delta})$ and $C(\bar{\Delta},\tau)$ are, and similarly $B(\bar{\Delta})$ contains the real numbers. Further by the definition of $C(\bar{\Delta},\tau)$ and the fact that $A(\bar{\Delta})=P(\bar{\Delta})$ we have that $B(\bar{\Delta})$ is the $\mathbb{R}$-algebra of all uniform limits of polynomials on $\bar{\Delta}$ with real coefficients. Hence $B(\bar{\Delta})$ separates the points of $\bar{\Delta}$ since it contains the function $f(z):=z$. However whilst $\tau$ is in $C(\bar{\Delta},\tau)$ it is not an element of $A(\bar{\Delta})$. Therefore $B(\bar{\Delta})$ is a real function algebra on $(\bar{\Delta},\tau)$ and a proper subalgebra of $C(\bar{\Delta},\tau)$. It is referred to as the {\em{real disc algebra}}. 
\label{exa:Rdal}
\end{example}
\subsection{The commutative, non-Archimedean case}
\label{sec:Ycna}
If $J$ is a maximal ideal of a commutative unital complex Banach algebra $A$ then $J$ has codimension one since $A/J$ with the quotient norm is isometrically isomorphic to the complex numbers by, in part, the Gelfand-Mazur theorem. In contrast, for a complete non-Archimedean field $F$, if $I$ is a maximal ideal of a commutative unital Banach $F$-algebra then $I$ may have large finite or infinite codimension, note Theorem \ref{thr:Krul}. Hence, with Gelfand transform theory in mind, it makes sense to consider non-Archimedean analogs of uniform algebras in the form suggested by real function algebras where the functions take values in a complete extension of the underlying field of scalars. Moreover when there is a lattice of intermediate fields then these fields provide a way for a lattice of extensions of the algebra to occur. See \cite{Berkovich} and \cite{Escassut} for one form of the Gelfand transform in the non-Archimedean setting.

For the purpose of this survey we consider commutative, non-Archimedean examples involving an order two extension of the scalars. More generally we choose a homeomorphisom $\tau$ with order dividing that of the Galois automorphisom used.
\begin{example}
Let $F:=\mathbb{Q}_{5}$ and $L:=\mathbb{Q}_{5}(\sqrt{2})$. Suppose towards a contradiction that $\sqrt{2}$ is already an element of $\mathbb{Q}_{5}$. With reference to section \ref{sec:Comp}, we would have $1=|2|_{5}=|\sqrt{2}^{2}|_{5}=|\sqrt{2}|_{5}^{2}$ giving $|\sqrt{2}|_{5}=1$. But then $\sqrt{2}$ would have a 5-adic expansion of the form $\sum_{n\in\mathbb{N}_{0}}a_{n}5^{n}$ with $a_{0}\not=0$ where $\mathbb{N}_{0}:=\mathbb{N}\cup\{0\}$. Hence
\begin{equation}
a_{0}^{2} +2a_{0}\sum_{n\in\mathbb{N}}a_{n}5^{n}+\left(\sum_{n\in\mathbb{N}}a_{n}5^{n}\right)^{2}
\label{equ:sqrt}
\end{equation}
should be equal to 2. In particular $a_{0}^{2}$ should have the form $2+b$ where $b$ is a natural number, with a factor of 5, that cancels with the remaining terms of (\ref{equ:sqrt}). But since $a_{0}\in\{1,2,3,4\}$ we have $a_{0}^{2}\in\{1,4,4+5,1+3\cdot5\}$, a contradiction. Now the Galois group of $^{L}/_{F}$ is $\mbox{Gal}(^{L}/_{F})=\{\mbox{id},g\}$ where $g$ sends $\sqrt{2}$ to $-\sqrt{2}$. The complete valuation on $F$ has a unique prolongation to a complete valuation on $L$, see \cite{Schikhof} and Theorem \ref{thr:Krul}. Hence $g$ is an isometry on $L$ and so explicitly we have, for all $a\in L$,
\begin{equation*}
|a|_{L}=\sqrt{|a|_{L}|g(a)|_{L}}=\sqrt{|ag(a)|_{L}}=\sqrt{|ag(a)|_{5}},
\end{equation*}
noting that $ag(a)\in F$. In terms of the valuation logarithm from Section \ref{sec:Comp} we have $\sqrt{|ag(a)|_{5}}=5^{-\frac{1}{2}\nu_{5}(ag(a))}$ and so we define the valuation logarithm for $L$ as $\omega(a):=\frac{1}{2}\nu_{5}(ag(a))$ for all $a\in L$. One can check using the 5-adic expansion that for any element $a+\sqrt{2}b\in L^{*}$, for $a,b\in F$, we have $\omega(a+\sqrt{2}b)=\frac{1}{2}\nu_{5}(a^{2}-2b^{2})\in\mathbb{Z}$. In fact $^{L}/_{F}$ is classified as an unramified extension meaning that $\omega(L^{*})=\omega(F^{*})$. Now $L$ is locally compact and so $\bar{\Delta}_{L}:=\{x\in L:|x|_{L}\leq1\}$ is compact. Further if we take $\tau_{1}$ to be the restriction of $g$ to $\bar{\Delta}_{L}$ then, since $g$ is an isometry on $L$, $\tau_{1}$ is a topological involution on $\bar{\Delta}_{L}$ and
\begin{equation*}
C(\bar{\Delta}_{L},\tau_{1},g):=\{f\in C_{L}(\bar{\Delta}_{L}):f(\tau_{1}(x))=g(f(x))\mbox{ for all }x\in\bar{\Delta}_{L}\} 
\end{equation*}
is a non-Archimedean analog of the real disc algebra. In fact $C(\bar{\Delta}_{L},\tau_{1},g)$ is an $F$-algebra such that every polynomial and power series in $C(\bar{\Delta}_{L},\tau_{1},g)$ has $F$-valued coefficients noting that the polynomials on $\bar{\Delta}_{L}$ with $L$-valued coefficients are uniformly dense in $C_{L}(\bar{\Delta}_{L})$ by Theorem \ref{thr:Kapl}. The proof that $C(\bar{\Delta}_{L},\tau_{1},g)$ is uniformly closed and separates the points of $\bar{\Delta}_{L}$ is the same as that for real function algebras, see \cite{Kulkarni-Limaye1992}, only we use Lemma \ref{lem:Urys} in place of Urysohn's lemma. 
\label{exa:Fdal}
\end{example}
\begin{example}
Let $F$, $L$, $\bar{\Delta}_{L}$, $\omega$ and $g$ be as in Example \ref{exa:Fdal}. Define $\tau_{2}(0):=0$ and for $x\in \bar{\Delta}_{L}\backslash\{0\}$, 
\begin{equation*}
\tau_{2}(x):= \left\{ \begin{array} {r@{\quad\mbox{if}\quad}l}
5x & 2\mid\omega(x) \\
5^{-1}x & 2\nmid\omega(x).
\end{array} \right. 
\end{equation*}
One checks that $\tau_{2}$ is a topological involution on $\bar{\Delta}_{L}$. In this case the only power series in $C(\bar{\Delta}_{L},\tau_{2},g)$ are constants belonging to $F$. However there are elements of $C(\bar{\Delta}_{L},\tau_{2},g)$ that can be expressed as power series on each of the circles $\mathcal{C}(n):=\{x\in\bar{\Delta}_{L}:\omega(x)=n\}$, for $n\in\omega(\bar{\Delta}_{L})$, but for such an element we can not use the same power series on all of these circles.
\label{exa:Ycna}
\end{example}
\subsection{The non-commutative, Archimedean case}
\label{sec:Ncya}
In recent years a theory of non-commutative real function algebras has been developed by Jarosz and Abel, see \cite{Abel-Jarosz} and \cite{Jarosz}. The continuous functions involved take values in Hamilton's real quaternions, $\mathbb{H}$, which are an example of a non-commutative complete Archimedean division ring and $\mathbb{R}$-algebra. Viewing $\mathbb{H}$ as a real vector space, the valuation on $\mathbb{H}$ is the Euclidean norm which is complete, Archimedean and indeed a valuation since being multiplicative on $\mathbb{H}$. To put $\mathbb{H}$ into context, as in the case of complete Archimedean fields, there are very few unital division algebras over the reals with the Euclidean norm as a valuation. Up to isomorphism they are $\mathbb{R}$, $\mathbb{C}$, $\mathbb{H}$ and the octonions $\mathbb{O}$. We note that the octonions are non-associative. The proof that there are no other unital division algebras over the reals with the Euclidean norm as a valuation is given by Hurwitz's 1, 2, 4, 8 Theorem, see \cite{Shapiro} and \cite{Lewis}. In particular for such an algebra $\mathbb{A}$ the square of the Euclidean norm is a regular quadratic form on $\mathbb{A}$ and since for $\mathbb{A}$ the Euclidean norm is a valuation it is multiplicative. This shows that $\mathbb{A}$ is a real composition algebra to which Hurwitz's 1, 2, 4, 8 Theorem can be applied.

Here we only briefly consider non-commutative real function algebras and hence the reader is also referred to \cite{Jarosz}. Note the author of this paper is unaware of any such developments involving the octonions. Here is Jarosz's analog of Definition \ref{def:Refa}.
\begin{definition}
Let $\mbox{Gal}(^{\mathbb{H}}/_{\mathbb{R}})$ be the group of all automorphisms on $\mathbb{H}$ that are the identity on $\mathbb{R}$. Let $X$ be a compact space and $\mbox{Hom}(X)$ be a group of homeomorphisms on $X$. For a group homomorphism $\Phi:\mbox{Gal}(^{\mathbb{H}}/_{\mathbb{R}})\longrightarrow\mbox{Hom}(X)$, $\Phi(T)=\Phi_{T}$, we define
\begin{equation*}
C_{\mathbb{H}}(X,\Phi):=\{f\in C_{\mathbb{H}}(X):f(\Phi_{T}(x))=T(f(x))\mbox{ for all }x\in X\mbox{ and } T\in\mbox{Gal}(^{\mathbb{H}}/_{\mathbb{R}})\}.
\end{equation*}
\label{def:Ncrf}
\end{definition}
\vspace{-3mm}
The groups $\mbox{Gal}(^{\mathbb{H}}/_{\mathbb{R}})$ and $\mbox{Hom}(X)$ in Definition \ref{def:Ncrf} have composition as their group operation. As a conjecture the author suggests that Definition \ref{def:Ncrf} may also be useful if $\mbox{Gal}(^{\mathbb{H}}/_{\mathbb{R}})$ is replaced by a subgroup, particularly when considering extensions of the algebra. However Definition \ref{def:Ncrf} has already been used successfully in the representation of real unital Banach algebras with square preserving norm. Jarosz also gives the following Stone-Weierstrass theorem type result. 
\begin{definition}
A real algebra $A$ is {\em{fully non-commutative}} if every nonzero multiplicative, linear functional $F:A\longrightarrow\mathbb{H}$ is surjective.
\label{def:Fnon}
\end{definition}
\begin{theorem}
Let $A$ be a fully non-commutative closed subalgebra of $C_{\mathbb{H}}(X)$. Then $A=C_{\mathbb{H}}(X)$ if and only if $A$ strongly separates the points of $X$ in the sense that for all $x_{1}, x_{2}\in X$ with $x_{1}\not= x_{2}$ there is $f\in A$ satisfying $0\not= f(x_{1})\not= f(x_{2})$.
\label{thr:Ncsw}
\end{theorem}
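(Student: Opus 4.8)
The plan is to treat the two implications separately; necessity is immediate and sufficiency carries the content. For necessity, assume $A=C_{\mathbb{H}}(X)$ and let $x_{1},x_{2}\in X$ with $x_{1}\not= x_{2}$. Since $X$ is compact Hausdorff, hence normal, Urysohn's lemma (see \cite{Willard}) provides a continuous real-valued $u$ on $X$ with $u(x_{1})=1$ and $u(x_{2})=0$. Regarded as an $\mathbb{H}$-valued function, $u\in A$ and satisfies $0\not= u(x_{1})\not= u(x_{2})$, so $A$ strongly separates the points of $X$.

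For sufficiency I would argue in three steps. First, fix $x\in X$ and consider the evaluation $\delta_{x}:A\longrightarrow\mathbb{H}$, $\delta_{x}(f):=f(x)$, which is a nonzero multiplicative, $\mathbb{R}$-linear functional: it is nonzero because strong separation yields some $f\in A$ with $f(x)\not=0$. Full non-commutativity (Definition \ref{def:Fnon}) then forces $\delta_{x}$ to be surjective, so that $\{f(x):f\in A\}=\mathbb{H}$ for every $x\in X$. Second, I would descend to the real subalgebra $A_{0}:=\{f\in A:f(X)\subseteq\mathbb{R}\}$ and verify that it satisfies the hypotheses of the real Stone--Weierstrass theorem recalled in Section \ref{sec:Swei}, namely that $A_{0}$ is uniformly closed, contains the real constants, and separates the points of $X$; this would give $A_{0}=C_{\mathbb{R}}(X)$, that is $C_{\mathbb{R}}(X)\subseteq A$. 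The key device here is the averaging identity
\begin{equation*}
\mbox{Re}(q)=\frac{1}{4}\left(q+iqi^{-1}+jqj^{-1}+kqk^{-1}\right)\mbox{ for all }q\in\mathbb{H},
\end{equation*}
which extracts real parts once the conjugating constants are available inside $A$. Third, having $C_{\mathbb{R}}(X)\subseteq A$ together with the constants $1,i,j,k\in A$, any $g=g_{0}+g_{1}i+g_{2}j+g_{3}k\in C_{\mathbb{H}}(X)$ with real $g_{0},g_{1},g_{2},g_{3}$ is a finite sum of products of members of $A$, so $g\in A$ and hence $A=C_{\mathbb{H}}(X)$.

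The main obstacle is the second step, and specifically the manufacture of the constant functions $i,j,k$ as genuine elements of $A$. Since $A$ is only a real algebra and is not assumed unital, the averaging identity cannot be applied directly: the conjugating quaternions must first be produced inside $A$. This is exactly where full non-commutativity is essential rather than decorative---pointwise surjectivity of the $\delta_{x}$ supplies, near each point, functions attaining the imaginary units, and by continuity these targets are approximated on a neighbourhood of the point. The difficulty is to glue these local choices into global elements of $A$; I would do this by covering the compact space $X$ with finitely many such neighbourhoods and patching with a partition of unity, the real cut-off functions being supplied by $A_{0}$ itself once it is known to separate points and contain the constants. Closing this loop, so that the non-commutative hypothesis forces $A_{0}$ to be all of $C_{\mathbb{R}}(X)$ rather than merely a commutative ($\mathbb{C}$-valued) subalgebra, is the crux of the argument; see \cite{Jarosz} for the original treatment.
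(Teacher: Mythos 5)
The paper itself gives no proof of Theorem \ref{thr:Ncsw}; it is quoted as a survey item with the proof deferred to \cite{Jarosz} (and to Abel--Jarosz \cite{Abel-Jarosz}), so there is nothing in-text to compare your argument against. Judged on its own terms, your necessity direction is correct: Urysohn's lemma on the normal space $X$ produces a real-valued $u$ with $u(x_{1})=1$, $u(x_{2})=0$, and this strongly separates $x_{1}$ from $x_{2}$. The averaging identity $\mathrm{Re}(q)=\frac{1}{4}\bigl(q+iqi^{-1}+jqj^{-1}+kqk^{-1}\bigr)$ is also correct, and the final step (from $C_{\mathbb{R}}(X)\subseteq A$ and $1,i,j,k\in A$ to $A=C_{\mathbb{H}}(X)$) is sound.

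However, the sufficiency direction contains a genuine gap, which you yourself flag: nothing in the proposal actually produces the constant functions $1,i,j,k$ as elements of $A$, nor shows that $A_{0}$ separates points, and these are exactly the inputs your Stone--Weierstrass step needs. The repair you sketch is circular: you propose to glue locally chosen quaternion-valued functions using cut-off functions ``supplied by $A_{0}$ itself once it is known to separate points and contain the constants,'' but that knowledge is the conclusion of the very step being carried out. There is a second, related weakness: full non-commutativity is a hypothesis on \emph{all} nonzero multiplicative $\mathbb{H}$-valued functionals on $A$, whereas you only exploit the point evaluations $\delta_{x}$. Pointwise surjectivity of the $\delta_{x}$ says only that $\{f(x):f\in A\}=\mathbb{H}$ for each fixed $x$, which does not by itself rule out ``twisted'' proper subalgebras in the spirit of $C_{\mathbb{H}}(X,\Phi)$ from Definition \ref{def:Ncrf}; the strength of the hypothesis lies precisely in controlling the non-evaluation functionals, and your outline never uses them. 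As it stands the argument establishes the easy implication and a correct endgame, but the crux --- passing from strong separation plus full non-commutativity to the presence of the constants and of a separating real subalgebra --- is asserted rather than proved. You should consult \cite{Jarosz} and \cite{Abel-Jarosz} for how this step is actually closed.
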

\subsection{The non-commutative, non-Archimedean case}
\label{sec:Ncna}
Non-commutative, non-Archimedean analogs of uniform algebras have yet to be seen. Hence in this section we give an example of a non-commutative extension of a complete non-Archimedean field which would be appropriate when considering such analogs of uniform algebras. We first have the following definition from the general theory of quaternion algebras. The main reference for this section is \cite{Lam} but \cite{Lewis} is also useful.
\begin{definition}
Let $F$ be a field, with characteristic not equal to 2, and $s,t\in F^{*}$ where $s=t$ is allowed. We define the {\em{quaternion $F$-algebra}} $(\frac{s,t}{F})$ as follows. As a 4-dimensional vector space over $F$ we define 
\begin{equation*}
\left(\frac{s,t}{F}\right):=\{a+bi+cj+dk:a,b,c,d\in F\}
\end{equation*}
with $\{1,i,j,k\}$ as a natural basis giving the standard coordinate-wise addition and scalar multiplication. As an $F$-algebra, multiplication in $(\frac{s,t}{F})$ is given by 
\begin{equation*}
i^{2}=s,\mbox{\hspace{3mm}}j^{2}=t,\mbox{\hspace{3mm}}k^{2}=ij=-ji
\end{equation*}
together with the usual distributive law and multiplication in $F$. 
\label{def:Gtqa}
\end{definition}
Hamilton's real quaternions, $\mathbb{H}:=(\frac{-1,-1}{\mathbb{R}})$ with the Euclidean norm, is an example of a non-commutative, complete valued, Archimedean, division algebra over $\mathbb{R}$. It is not the case that every quaternion algebra $(\frac{s,t}{F})$ will be a division algebra, although there are many examples that are. For our purposes we have the following example.
\begin{example}
Using $\mathbb{Q}_{5}$, the complete non-Archimedean field of 5-adic numbers, define
\begin{equation*}
\mathbb{H}_{5}:=\left(\frac{5,2}{\mathbb{Q}_{5}}\right). 
\end{equation*}
Then for $q,r\in\mathbb{H}_{5}$, $q=a+bi+cj+dk$, the conjugation on $\mathbb{H}_{5}$ given by
\begin{equation*}
\bar{q}:=a-bi-cj-dk 
\end{equation*}
is such that $\overline{q+r}=\bar{q}+\bar{r}$, $\overline{qr}=\bar{r}\bar{q}$, $\bar{q}q=q\bar{q}=a^{2}-5b^{2}-2c^{2}+10d^{2}$ with $\bar{q}q\in\mathbb{Q}_{5}$ and
\begin{equation*}
|q|_{\mathbb{H}_{5}}:=\sqrt{|\bar{q}q|_{5}} 
\end{equation*}
is a complete non-Archimedean valuation on $\mathbb{H}_{5}$, where $|\cdot|_{5}$ is the 5-adic valuation on $\mathbb{Q}_{5}$. In particular $\mathbb{H}_{5}$, together with $|\cdot|_{\mathbb{H}_{5}}$, is an example of a non-commutative, complete valued, non-Archimedean, division algebra over $\mathbb{Q}_{5}$. When showing this directly it is useful to know that for $a,b,c,d\in\mathbb{Q}_{5}$ we have
\begin{equation*}
\nu_{5}(a^{2}-5b^{2}-2c^{2}+10d^{2})=\mbox{min}\{\nu_{5}(a^{2}), \nu_{5}(5b^{2}), \nu_{5}(2c^{2}), \nu_{5}(10d^{2})\} 
\end{equation*}
where $\nu_{5}$ is the 5-adic valuation logarithm as defined in Section \ref{sec:Comp}.
\label{exa:Ncna}
\end{example}
More generally for the $p$-adic field $\mathbb{Q}_{p}$ the quaternion algebra $(\frac{p,u}{\mathbb{Q}_{p}})$ will be a division algebra as long as $u$ is a unit of $\{a\in\mathbb{Q}_{p}:|a|_{p}\leq 1\}$, i.e. $|u|_{p}=1$, and $\mathbb{Q}_{p}(\sqrt{u})$ is a quadratic extension of $\mathbb{Q}_{p}$.
\providecommand{\bysame}{\leavevmode\hbox to3em{\hrulefill}\thinspace}
\providecommand{\MR}{\relax\ifhmode\unskip\space\fi MR }
\providecommand{\MRhref}[2]{%
  \href{http://www.ams.org/mathscinet-getitem?mr=#1}{#2}
}
\providecommand{\href}[2]{#2}

\section*{Acknowledgments}
The author would like to thank Dr. K. Jarosz, as the principal organiser of the 6th Conference on Function Spaces (2010) at Southern Illinois University Edwardsville, for arranging funds in support of the author's participation at the conference.

Similarly the author is grateful for the conference funding provided by the School of Mathematical Sciences, University of Nottingham, UK.

Special thanks to Dr. J. F. Feinstein for his valuable advice and enthusiasm as the author's supervisor.
\end{document}